\documentclass[12pt,letterpaper, reqno]{amsart}
\textheight=9.0  true in
\textwidth=6.5 true in
\hoffset=-0.6true in
\voffset=-0.4true in

\usepackage{amssymb,latexsym, amsmath, amsxtra}
\usepackage[dvips]{graphics}

\theoremstyle{plain}
\newtheorem{theorem}{Theorem}[section]
\newtheorem{corollary}[theorem]{Corollary}

\newtheorem{proposition}[theorem]{Proposition}
\theoremstyle{definition}

\theoremstyle{remark}


\numberwithin{equation}{section}
\numberwithin{theorem}{section}
\numberwithin{table}{section}
\numberwithin{figure}{section}

\def\({\left(}
\def\){\right)}

\begin{document}
\title{Palindromic random trigonometric polynomials}
\author{J. Brian Conrey, David W. Farmer, and \"Ozlem Imamo\-glu}

\thanks{
Research of the first two authors supported by the
American Institute of Mathematics
and the National Science Foundation}

\thispagestyle{empty}
\vspace{.5cm}
\begin{abstract}
We show that if a real trigonometric polynomial has few
real roots, then the trigonometric polynomial obtained by writing the
coefficients in reverse order must have many real roots.  This is used
to show that a class of random
trigonometric polynomials has, on average, many real roots.
In the case that the coefficients of a 
 real trigonometric polynomial are independently and identically
distributed, but with no other assumptions on the distribution,
the expected fraction of real zeros is at
least one-half.  This result is best possible.
\end{abstract}

\address{
{\parskip 0pt
American Institute of Mathematics\endgraf
conrey@aimath.org\endgraf
farmer@aimath.org\endgraf
\null
ETH Zurich\endgraf
ozlem@math.ethz.ch
}
  }

\maketitle

\section{Introduction}

A random trigonometric polynomial is a function of the form
\begin{equation}\label{eqn:rtp}
F(x)=\sum_{k=0}^{K} a_k \cos(k x)  
\end{equation}
where the $a_k$ 
are random variables. 

We give a simple proof of the following very general result:

\begin{theorem}\label{thm:iid}
Suppose $C(x)$ is a real trigonometric polynomial
\begin{equation}
C(x)=\sum_{k=L}^{M} a_k \cos(k x)
\end{equation}
where the $a_k$ are independently and identically distributed
real random variables.
Then, for those $C(x)$ which are not identically zero,
the expected number of real zeros in each period is
at least~$L+M$.
\end{theorem}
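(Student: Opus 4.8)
The plan is to split the statement into a deterministic, pointwise inequality and a one-line averaging argument that exploits the i.i.d.\ hypothesis. Write $N(C)$ for the number of zeros of $C$ in a period $[0,2\pi)$, and let $\tilde C(x)=\sum_{k=L}^{M}a_{L+M-k}\cos(kx)$ be the trigonometric polynomial obtained by writing the coefficients in reverse order. The heart of the matter is the deterministic bound
\[
N(C)+N(\tilde C)\ge 2(L+M),
\]
valid for every coefficient vector with $C\not\equiv 0$. Granting this, the theorem is immediate: since the $a_k$ are i.i.d., their joint law is invariant under the reversal permutation $(a_L,\dots,a_M)\mapsto(a_M,\dots,a_L)$, so $C$ and $\tilde C$ are identically distributed and $E\,N(C)=E\,N(\tilde C)$. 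Taking expectations in the displayed inequality then gives $2\,E\,N(C)=E\bigl(N(C)+N(\tilde C)\bigr)\ge 2(L+M)$, that is, $E\,N(C)\ge L+M$.

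To prove the deterministic bound I would pass to the unit circle. Set $A(z)=\sum_{k=L}^{M}a_k z^{k}$; then with $z=e^{ix}$ one has $C(x)=\Re A(e^{ix})$, and a short computation shows that reversing the coefficients corresponds to $\tilde A(z)=z^{L+M}A(1/z)$, hence $\tilde C(x)=\Re\bigl(e^{i(L+M)x}\overline{A(e^{ix})}\bigr)$. Writing $A(e^{ix})=R(x)e^{i\theta(x)}$ in polar form, these become
\[
C(x)=R(x)\cos\theta(x), \qquad \tilde C(x)=R(x)\cos\bigl((L+M)x-\theta(x)\bigr).
\]
Thus, away from the zeros of $R$ (the roots of $A$ on the unit circle), the zeros of $C$ are the solutions of $\theta(x)\equiv\tfrac{\pi}{2}\pmod\pi$ and those of $\tilde C$ the solutions of $(L+M)x-\theta(x)\equiv\tfrac{\pi}{2}\pmod\pi$.

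The counting is then governed by winding numbers. By the argument principle the net increase of $\theta$ over one period is $2\pi d$, where $d$ is the number of zeros of $A$ inside the unit disk (including the order-$L$ zero at the origin, so $d\ge 0$); and since $(L+M)x-\theta$ has net increase $2\pi\bigl((L+M)-d\bigr)$, the count for $\tilde C$ uses $\tilde d=(L+M)-d$. A level-crossing argument---as a continuous function increases by a net $2\pi d$ it must meet the grid $\{\tfrac{\pi}{2}+n\pi\}$ at least $2d$ times---gives $N(C)\ge 2d$ and $N(\tilde C)\ge 2\tilde d$, and adding yields $N(C)+N(\tilde C)\ge 2(L+M)$. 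The identity $d+\tilde d=L+M$ is the crucial feature: it is forced purely by the reversal symmetry $\tilde A(e^{ix})=e^{i(L+M)x}\overline{A(e^{ix})}$, and is completely independent of where the roots of $A$ actually lie (in particular it is insensitive to whether $a_M$ vanishes).

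I expect the main obstacle to be making the level-crossing count fully rigorous and disposing of degenerate configurations. When $\theta$ is not monotone one must argue that oscillations only create extra crossings, so $2d$ remains a valid lower bound; when $A$ has zeros on the unit circle, $R$ vanishes and $\theta$ is ill-defined there, so I would either perturb the coefficients slightly and pass to the limit---on-circle roots only contribute additional zeros of $C$, so the inequality is stable---or treat such points by hand. Finally, for the averaging step one must check that conditioning on the event $\{C\not\equiv 0\}$ preserves the reversal symmetry; it does, since reversal fixes that event and the law is exchangeable, so $E\,N(C)=E\,N(\tilde C)$ continues to hold after conditioning.
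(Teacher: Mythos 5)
Your proposal is correct, and its probabilistic half coincides with the paper's: both reduce the theorem to a deterministic inequality $N(C)+N(\tilde C)\ge 2(L+M)$ for nonzero coefficient vectors and then average, using that an i.i.d.\ law is invariant under reversal of the coefficients and that the exceptional event $\{C\equiv 0\}$ (equivalently, all $a_k=0$) is itself reversal-invariant. Where you genuinely diverge is in the proof of the deterministic inequality. The paper (Proposition~\ref{prop:main}, after zero-padding so that $K=L+M$) forms the self-reciprocal Laurent polynomial $h(z)=z^{K}f(1/z)+z^{-K}f(z)$, which is real on $|z|=1$ with $h(e^{i\theta})=2C_{\overline{a}}(\theta)$, and exploits the identity $h(e^{2\pi i j/2K})=2(-1)^{j}C_{a}(2\pi j/2K)$: sampling at the $(2K)$-th roots of unity, every consecutive pair of samples is a sign change either for $(-1)^{j}C_{a}$ or for $C_{a}$, so the two sign-change counts sum to at least $2K$ and the zero counts follow. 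This is purely finite and elementary. You instead write $A(e^{ix})=R(x)e^{i\theta(x)}$ and use the argument principle: the winding number $d$ of $A$ (at least $L$, because of the zero of order at least $L$ at the origin) and the winding number $(L+M)-d$ of $\tilde A$ sum to $L+M$ no matter where the roots of $A$ lie, and level-crossing counts against the grid $\{\pi/2+n\pi\}$ give $N(C)\ge 2d$ and $N(\tilde C)\ge 2(L+M)-2d$. Your route is more conceptual and yields refined information the paper's does not---for instance $N(C)\ge 2d\ge 2L$ recovers (indeed slightly sharpens) the paper's corollary on polynomials whose low-order coefficients vanish---but it pays for this with the degenerate case you flag: when $A$ has zeros on $|z|=1$, the phase $\theta$ is undefined there and the argument principle does not apply, so you must perturb and pass to the limit. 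That limiting step is where the real work lies: you need both that arbitrarily small perturbations avoiding circle zeros exist, and a Hurwitz-type argument that real zeros counted with multiplicity cannot be lost in the limit (real zeros of the approximants can cluster only at real zeros of the limit function, with total count bounded by the multiplicity there). Both points are standard and your sketch of them is accurate, so the proof closes; the paper's discrete sampling argument is best viewed as a device that makes exactly this degenerate-case analysis unnecessary.
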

In particular, on average at least half the zeros of $C(x)$ are real. 

The surprising thing about the theorem is that we obtain a fairly
strong conclusion with absolutely no assumption on the 
coefficients beyond the fact that they are iid.  We now contrast this
with previous results on this topic.

In the case that the $a_k$ 
are normally distributed,
the Kac-Rice formula~\cite{EK} provides a quick way to determine
the expected number of real zeros of~$F$.  Here ``number of real zeros''
refers to the number of real zeros in one period. In the case that
the normal distributions also are independent and have equal variance,
Dunnage~\cite{D} showed that for large $K$ the expected number of real zeros
is asymptotically $\frac{2 K}{\sqrt{3}}$.  That is, about 
$57.7\%$ of the zeros are real.

There are some exact results for the expected number of real zeros of~\eqref{eqn:rtp}
when the coefficients are not Gaussian.  For example, \cite{J}
considered the case that the distribution of coefficients is in the
domain of attraction of a stable law with exponent $0<\alpha\le 2$.
More generally, \cite{ST}
established a result assuming that 
the expected value of $|a_k|^3$ is finite.
Both of those results have an additional condition that the
probability $a_k=0$ is zero.

There are related results for random algebraic polynomials.
The papers~\cite{IM1, IM2} consider the case where $a_k$ is
in the domain of attraction
of the normal law, and the
probability $a_k=0$ is zero.
And in~\cite{IZ} the requirement is that
$a_k$ have bounded density
(which implies that the probability $a_k=0$ is zero)
and the expected value of $|a_k|^7$ is finite.

The need to assume that the probability $a_k=0$
is zero arises because if
there is a positive probability of the polynomial being
identically zero then the expected number of real zeros is
meaningless.   Thus, the identically zero polynomials
must be eliminated before computing the expected value,
and available methods cannot do that.

Restrictions on the moments
of the $a_k$ arise for several reasons.  Typically upper bounds 
on the moments are needed 
for technical reasons, and it is possible that existing results
can be improved.  
Lower bounds on the variance, as in~\cite{J, SR, ST}, are unavoidable.
This is required to rule out, for example, the case that
every $a_k$ is identically equal to~1, for which
all the roots of~\eqref{eqn:rtp} are real.
This shows that the condition of a lower bound on the variance, 
as in~\cite{J, SR, ST},
cannot be eliminated.

Despite the need for conditions on $a_k$ in order to determine the 
exact expected
number of real zeros of~\eqref{eqn:rtp}, 
we give an easy proof of Theorem~\ref{thm:iid}, which has
no conditions on the distribution.  We avoid the technical issue
of dealing with the possibility that the polynomial is identically
zero because our method allows us to ignore that case.  The cost is
that we obtain a lower bound instead of an exact expected value.
Nevertheless, our result is best possible because if $a_k$
is very likely to be zero with a small chance of equaling~1, for example,
then the nonzero~$C(x)$ usually have only one term and our lower
bound is obtained.

To state a more general version of the above theorem, we will
refer to
a joint probability distribution on 
$a_k$, $0\le k\le K$, 
as \emph{palindromic} if  
the $a_k$ are independent and
$(a_0,\ldots,a_K)$ has the same distribution as $(a_K,\ldots,a_0)$.
Note that if $a_0,\ldots,a_K$ are iid then the joint distribution
is palindromic.

\begin{theorem}\label{thm:palindrome}
If the joint probability distribution of $a_0,\ldots,a_K$ is
palindromic, then the expected number of real zeros of
\begin{equation}
C(x)=\sum_{k=0}^{K} a_k \cos(k x),
\end{equation}
for those $C(x)$ which are not identically zero,
is at least~$K$.
\end{theorem}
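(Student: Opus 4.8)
The plan is to reduce the probabilistic statement to a purely deterministic inequality relating the zeros of $C$ to the zeros of its \emph{reversal}
\[
\widetilde C(x)=\sum_{k=0}^{K}a_{K-k}\cos(kx),
\]
and then to exploit the palindromic symmetry. Writing $N(C)$ for the number of zeros of $C$ in one period, the deterministic claim I would isolate is that, whenever $C\not\equiv 0$,
\[
N(C)+N(\widetilde C)\ge 2K .
\]
Granting this, the probabilistic step is immediate. Since $(a_0,\dots,a_K)$ and $(a_K,\dots,a_0)$ have the same law, $C$ and $\widetilde C$ are identically distributed, so $N(C)$ and $N(\widetilde C)$ have the same expectation; moreover $C\equiv 0$ exactly when all $a_k=0$, which is exactly when $\widetilde C\equiv 0$, so the conditioning event ``$C\not\equiv 0$'' is common to both. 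Taking expectations of the displayed inequality over this event gives $2\,\mathbb E[N(C)\mid C\not\equiv 0]\ge 2K$, which is the theorem.

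To prove the deterministic inequality I would pass to the algebraic polynomial $p(z)=\sum_{k=0}^{K}a_k z^{k}$. Since the $a_k$ are real, $C(x)=\mathrm{Re}\,p(e^{ix})$, so the zeros of $C$ in $[0,2\pi)$ are precisely the points where the closed curve $x\mapsto p(e^{ix})$ meets the imaginary axis. If $p$ has full degree $K$ and no zero on the unit circle, the argument principle says that the net change of $\arg p(e^{ix})$ around the circle equals $2\pi n_{\mathrm{in}}$, where $n_{\mathrm{in}}$ is the number of zeros of $p$ inside the unit disk. A curve whose argument winds $n_{\mathrm{in}}$ times must cross each of the two imaginary half-axes at least $|n_{\mathrm{in}}|$ times, and these crossings occur at distinct values of $x$, so $N(C)\ge 2n_{\mathrm{in}}$. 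The reversal corresponds to $\widehat p(z)=z^{K}p(1/z)$, whose coefficient sequence is $(a_K,\dots,a_0)$ and whose zeros are the reciprocals of those of $p$; hence the number of zeros of $\widehat p$ inside the disk equals the number $n_{\mathrm{out}}$ of zeros of $p$ outside it, giving $N(\widetilde C)\ge 2n_{\mathrm{out}}$. Adding, and using $n_{\mathrm{in}}+n_{\mathrm{out}}=K$, yields $N(C)+N(\widetilde C)\ge 2K$.

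I expect the main obstacle to be the degenerate configurations that the clean winding argument sidesteps: zeros of $p$ on the unit circle, a vanishing leading coefficient $a_K$ (so that $\deg p<K$ and the reciprocal bookkeeping acquires zeros at the origin), and crossings of the imaginary axis that are tangential rather than transversal or that occur with multiplicity. Each of these should only help the inequality --- a zero of $p$ on the circle is literally a common zero of $C$ and its sine companion $\mathrm{Im}\,p(e^{ix})$, hence an honest zero of $C$ --- but turning ``should only help'' into a rigorous accounting, with multiplicities counted consistently on both sides, is the delicate part. I would handle it either by a perturbation/limiting argument (slightly dilating the circle, or perturbing the $a_k$, so that $p$ has full degree and no zeros on the unit circle, then passing to the limit) or by replacing the heuristic crossing count with a direct analysis of the variation of $\arg p(e^{ix})$ across the relevant arcs. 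The probabilistic conclusion is insensitive to how these measure-zero events are treated, which is precisely why conditioning merely on $C\not\equiv 0$ suffices and no moment or density hypotheses are needed.
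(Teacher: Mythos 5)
Your proposal is correct, and its overall architecture matches the paper's: you isolate exactly the paper's key deterministic inequality (Proposition~\ref{prop:main}: $Z(C_a)+Z(C_{\overline{a}})\ge 2K$ whenever $a\ne 0$) and then conclude by taking expectations, using that reversal preserves the joint law and that the event $C\equiv 0$ is reversal-invariant. Where you genuinely diverge is in how you prove that inequality. The paper's proof is discrete and elementary: with $f(z)=\sum a_k z^k$ and $h(z)=z^K f(1/z)+z^{-K}f(z)$, one checks $h(e^{2\pi i j/2K})=2(-1)^j C_a(2\pi j/2K)$, and since for each of the $2K$ consecutive pairs of sample points either $C_a(2\pi j/2K)$ or $(-1)^jC_a(2\pi j/2K)$ exhibits a sign change, the intermediate value theorem gives $Z(C_{\overline{a}})\ge 2K - Z(C_a)$ in one stroke, with no degenerate cases at all (the convention that a ``sign change'' means $\lambda_j\lambda_{j+1}\le 0$ absorbs zeros at sample points, and vanishing of $a_K$ or $a_0$ costs nothing). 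Your argument-principle route instead explains the number $2K$ structurally: $N(C)\ge 2n_{\mathrm{in}}$, $N(\widetilde C)\ge 2n_{\mathrm{out}}$, and $n_{\mathrm{in}}+n_{\mathrm{out}}=K$, where you correctly observe that a vanishing leading coefficient is self-correcting because $z^Kp(1/z)$ acquires compensating zeros at the origin, hence inside the disk. The price is the case of zeros of $p$ on $|z|=1$, which the winding count cannot see directly; your proposed perturbation does close this, but it is a real step rather than a formality: you need density of the coefficient vectors for which $p$ has no unimodular zeros, plus the Hurwitz-type upper semicontinuity that if $a^{(j)}\to a$ with $C_a\not\equiv 0$ then $N(C_a)\ge \limsup_j N(C_{a^{(j)}})$ (real zeros of the perturbations cluster, with multiplicity, at real zeros of the limit), applied to both $C$ and $\widetilde C$ and combined via $\limsup(A_j+B_j)\le\limsup A_j+\limsup B_j$. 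In exchange, your method yields a refinement the paper's sign-change count does not: in the generic case the $2K$ guaranteed zeros are distinct points, and they are apportioned between $C$ and $\widetilde C$ according to how the roots of $p$ split between the inside and outside of the unit disk.
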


To obtain Theorem~\ref{thm:iid} set
$a_0,\ldots,a_{L-1}$ and $a_{K-L+1},\ldots,a_{K}$ to always be zero,
and let the other coefficients be independently and identically
distributed.

Theorem~\ref{thm:palindrome} follows from a simple observation
about trigonometric
polynomials which doesn't actually have anything to do with
randomness.  
We introduce a small amount of notation and
give the main proof in the next section.  In Section~\ref{sec:other}
we discuss other cases in which these methods apply, such as
odd trigonometric polynomials.

\section{Cosine polynomials}

Throughout this section $a_k$ is either a real number or a real
random variable.
If $a=(a_0,\ldots,a_K)$ define
\begin{equation}
C_a(x)=\sum_{k=0}^K a_k \cos(k x)
\end{equation}
and let $\overline{a}=(a_K,\ldots,a_0)$.  Finally, if $f$ is
a trigonometric polynomial let $Z(f)$ denote the number of real zeros
of~$f$ in one period, counted with multiplicity.

All our results on random polynomials follow from the following
completely deterministic statement.

\begin{proposition}\label{prop:main}
If $a=(a_0,a_1,\ldots,a_K)\not = (0,0,\ldots,0)$ then
\begin{equation}\label{eqn:palindrome}
Z(C_{a}) + Z(C_{\overline{a}}) \ge 2 K.
\end{equation}
\end{proposition}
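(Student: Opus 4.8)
The plan is to pass to the unit circle via $z = e^{ix}$ and work with the ordinary polynomial $Q(z) = \sum_{k=0}^K a_k z^k$, so that $C_a(x) = \operatorname{Re} Q(e^{ix})$ since the $a_k$ are real. The single observation that drives everything is that reversing the coefficients corresponds to passing to the reciprocal polynomial: writing $Q_{\overline{a}}(z) = \sum_k a_{K-k} z^k = z^K Q(1/z)$ and using that $Q(1/z) = \overline{Q(z)}$ on $|z|=1$, one gets $Q_{\overline{a}}(e^{ix}) = e^{iKx}\,\overline{Q(e^{ix})}$. Writing $Q(e^{ix}) = \rho(x) e^{i\phi(x)}$ with $\rho \ge 0$ and $\phi$ a continuous branch of the argument away from the zeros of $\rho$, this yields the pair of formulas $C_a(x) = \rho(x)\cos\phi(x)$ and $C_{\overline{a}}(x) = \rho(x)\cos(Kx - \phi(x))$. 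This is the elementary identity the whole argument rests on.

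First I would treat the generic case in which $Q$ has no zeros on the unit circle, so $\rho(x) > 0$ for all $x$ and $\phi$ is globally defined and continuous on $[0,2\pi]$. Let $N$ be the winding number of $x \mapsto Q(e^{ix})$ about the origin; by the argument principle $N$ equals the number of zeros of $Q$ inside the open unit disk, so $0 \le N \le K$ and $\phi(2\pi) - \phi(0) = 2\pi N$. The zeros of $C_a$ in a period are exactly the solutions of $\cos\phi(x) = 0$, that is, the points where $\phi$ meets the level set $\tfrac{\pi}{2} + \pi\Z$; since the net signed number of such crossings equals $(\phi(2\pi)-\phi(0))/\pi = 2N$, the function $\cos\phi$ has at least $2N$ sign changes, whence $Z(C_a) \ge 2N$. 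Applying the identical argument to $\psi(x) = Kx - \phi(x)$, whose total increase over the period is $2\pi(K - N)$, gives $Z(C_{\overline{a}}) \ge 2(K - N)$. Adding these yields $Z(C_a) + Z(C_{\overline{a}}) \ge 2N + 2(K-N) = 2K$. I emphasize that this step needs no assumption that $a_K \ne 0$: the degree of $Q$ may drop, but $N$ and $K - N$ remain the relevant nonnegative integers.

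The main obstacle is the degenerate case in which $Q$ has zeros on the unit circle, i.e. $\rho(x_0) = 0$ for some $x_0$, since there $\phi$ is undefined and the winding number is not literally an integer. I would handle this by a local multiplicity analysis: if $e^{ix_0}$ is a zero of $Q$ of order $m$, then from $e^{ix} - e^{ix_0} = O(x-x_0)$ one sees that $Q(e^{ix}) = O((x-x_0)^m)$, and since $Q_{\overline{a}}(e^{ix_0}) = e^{iKx_0}\,\overline{Q(e^{ix_0})} = 0$ as well, both $C_a$ and $C_{\overline{a}}$ vanish to order at least $m$ at $x_0$. Thus each on-circle zero contributes at least $m$ to \emph{each} of the two counts, so these points only help, and after accounting for their contribution the remaining interior and exterior zeros can be balanced as in the generic case; alternatively one may perturb $a$ slightly to push the zeros off the circle and pass to the limit. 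Checking that the multiplicity bookkeeping, or the stability of the bound under perturbation with zeros counted with multiplicity, still delivers exactly $2K$ in every configuration is the one place that requires genuine care.
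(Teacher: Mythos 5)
You take a genuinely different route from the paper, so let me first compare. The paper never introduces winding numbers: it sets $f(z)=\sum_k a_k z^k$, forms $h(z)=z^{K}f(1/z)+z^{-K}f(z)$, whose restriction to $|z|=1$ equals $2C_{\overline{a}}$, and evaluates $h$ at the $2K$-th roots of unity, where $h(e^{2\pi i j/2K})=2(-1)^{j}C_{a}(2\pi j/2K)$. For each of the $2K$ consecutive pairs of sample points, either the sequence $C_{a}(2\pi j/2K)$ or the alternating sequence $(-1)^{j}C_{a}(2\pi j/2K)$ changes sign, and sign changes of samples are dominated by zero counts, so $Z(C_{\overline{a}})\ge 2K-Z(C_{a})$ at once. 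That argument is discrete and requires no case distinction at all. Your identity $C_{a}=\rho\cos\phi$, $C_{\overline{a}}=\rho\cos(Kx-\phi)$ is the continuous counterpart of the same self-reciprocity, and in the generic case (no zeros of $Q$ on $|z|=1$) your argument-principle count $Z(C_{a})\ge 2N$, $Z(C_{\overline{a}})\ge 2(K-N)$ is correct and complete, including the remark that no assumption $a_K\ne 0$ is needed.

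The degenerate case, however, is a genuine gap as written, and it is the only case requiring a new idea. Your bookkeeping claim --- that circle zeros ``only help'' and that afterwards the remaining zeros ``can be balanced as in the generic case'' --- cannot be run as stated: once $Q$ vanishes on the circle there is no global branch of $\phi$, and, the point the sketch misses, a circle zero does not merely force common zeros of $C_{a}$ and $C_{\overline{a}}$; it also carries winding, exactly half that of an interior zero. The clean repair: factor $Q(z)=P(z)\prod_j(z-e^{ix_j})^{m_j}$ with $P$ zero-free on the circle, and use $e^{ix}-e^{ix_j}=2ie^{i(x+x_j)/2}\sin((x-x_j)/2)$ to write $Q(e^{ix})=S(x)U(x)$, where $S(x)=2^{M}\prod_j\sin^{m_j}((x-x_j)/2)$ is real with $M=\sum_j m_j$ zeros per period, and $U$ is nonvanishing with $\arg U$ increasing by $2\pi N+\pi M$ per period. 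Then $C_{a}=S\,|U|\cos(\arg U)$ and $C_{\overline{a}}=S\,|U|\cos(Kx-\arg U)$, and the level-crossing count gives $Z(C_{a})\ge M+(2N+M)=2N+2M$ and $Z(C_{\overline{a}})\ge M+(2K-2N-M)=2K-2N$, for a total of $2K+2M$: the circle zeros do produce a surplus, but only after their half-winding is accounted for. Alternatively, your perturbation idea can be made rigorous by taking $Q_{\varepsilon}(z)=Q((1+\varepsilon)z)$, which scales all zeros inward and is zero-free on the circle for all but finitely many small $\varepsilon>0$; apply the generic case and let $\varepsilon\to 0$, using Hurwitz's theorem for the upper semicontinuity $\limsup_{\varepsilon\to 0}Z(C_{a_{\varepsilon}})\le Z(C_{a})$ --- this is exactly where counting zeros with multiplicity and the hypothesis $a\ne(0,\ldots,0)$ (so $C_a\not\equiv 0$) are used. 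Either repair finishes your proof; as submitted, the hard case is deferred rather than closed, and it is precisely the case the paper's sign-change argument handles for free.
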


That is, an even trigonometric polynomial and its ``reverse'' have
on average at least half of their zeros real.

The above Proposition is a slight generalization and strengthening of
Theorem~2 of~\cite{BEFL}.  Our result does not assume that any
particular $a_k$ is nonzero (except that $Z(0)$ is not defined),
and a slightly
larger lower bound than stated in~\cite{BEFL} follows by setting
$a_0=a_K=0$. For the case of $\{0,1\}$ polynomials
in Theorem~3 of~\cite{BEFL}, we obtain a lower bound of $n+1$,
which improves their lower bound of~$n/4$.

\begin{proof}[Proof of Theorem~\ref{thm:palindrome}]
Take the expected value of \eqref{eqn:palindrome}.
By assumption $Z(C_{a})$ and $Z(C_{\overline{a}})$ have the
same distribution, and therefore the same expected value.
\end{proof}

Note that the expected value of $Z(C_{a})$ exists because it is
the average of a bounded measurable function.

As another application of Proposition~\ref{prop:main} we have
the following, which appears to be nontrivial if~$L\ge 2$.

\begin{corollary}
If $a=(a_0,\ldots,a_K)$ with $a_k=0$ for $k\le L$, then $Z(C_a)\ge 2 L$.
\end{corollary}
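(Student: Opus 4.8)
The plan is to feed the hypothesis straight into Proposition~\ref{prop:main} and to exploit the fact that reversing the coefficient vector turns a block of vanishing low-frequency coefficients into a block of vanishing high-frequency coefficients, which forces the reversed polynomial to have small degree and hence few zeros.

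First I would record what reversal does to the support. By hypothesis $a_k=0$ for $0\le k\le L$, so in $\overline{a}=(a_K,\ldots,a_0)$ the entry in position $j$ is $a_{K-j}$, which vanishes whenever $K-j\le L$, that is, whenever $j\ge K-L$. Hence $C_{\overline{a}}(x)=\sum_{j=0}^{K-L-1} a_{K-j}\cos(jx)$ is a cosine polynomial of degree at most $K-L-1$. Since a nonzero real trigonometric polynomial of degree $n$ has at most $2n$ zeros in one period (the standard bound, realized by $\cos(nx)$), I would conclude $Z(C_{\overline{a}})\le 2(K-L-1)$.

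Next I would apply Proposition~\ref{prop:main}. Here $C_a\not\equiv 0$ is implicit, since otherwise $Z(C_a)$ is undefined; equivalently $a\neq 0$, and because reversal preserves the set of nonzero entries we also have $\overline{a}\neq 0$, so both $Z(C_a)$ and $Z(C_{\overline{a}})$ are defined and \eqref{eqn:palindrome} is genuinely available. Rearranging \eqref{eqn:palindrome} and inserting the degree bound gives
\[
Z(C_a)\ \ge\ 2K - Z(C_{\overline{a}})\ \ge\ 2K - 2(K-L-1)\ =\ 2L+2\ \ge\ 2L,
\]
which is the claim, in fact with two zeros to spare.

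There is no serious obstacle here: the whole content is the observation that a gap at the bottom of the spectrum becomes a gap at the top after reversal. The only points needing care are the elementary degree bound $Z(f)\le 2\deg f$ and the check that $C_{\overline{a}}$ is not identically zero so that Proposition~\ref{prop:main} applies; both are immediate. This is also consistent with the remark that the statement is of interest only for $L\ge 2$, since for small $L$ the conclusion $Z(C_a)\ge 2L$ is weak or vacuous.
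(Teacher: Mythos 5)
Your proof is correct and takes essentially the same route as the paper: apply Proposition~\ref{prop:main} together with the observation that reversal turns the vanishing low-frequency block into a vanishing high-frequency block, so $C_{\overline{a}}$ has small degree and hence few zeros. In fact your bookkeeping is slightly sharper: from $a_k=0$ for $k\le L$ you get $\deg C_{\overline{a}}\le K-L-1$ and hence $Z(C_a)\ge 2L+2$, whereas the paper's proof only invokes the (valid but less tight) bound $\deg C_{\overline{a}}\le K-L$.
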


\begin{proof} 
If $C_a\equiv 0$ then the conclusion is true.  Otherwise,
$C_{\overline{a}}$ has (trigonometric) degree
at most $K-L$ and so can have at most~$2(K-L)$ zeros.  In other words,
$Z(C_{\overline{a}})\le 2K-2L$.
\end{proof}

\subsection{Proof of Proposition~\ref{prop:main}}
Real trigonometric polynomials are just another
view of self-reciprocal polynomials, and our proof involves
counting and estimating, in two different ways,
the number of zeros on the unit circle of a self-reciprocal polynomial.

Let
\begin{equation}
f(z)=\sum_{k=0}^K a_k z^k
\end{equation}
and set
\begin{equation}
h(z)=z^K  f(1/z) + z^{-K}f(z) .
\end{equation}
Note that 
\begin{equation}
h(e^{i\theta}) = 2\sum_{k=0}^K a_k \cos((K-k)\theta),
\end{equation}
so real zeros of $C_{\overline{a}}$ correspond to zeros of $h(z)$
on $|z|=1$.

Since  $h(z)$ is real on $|z|=1$, the number of zeros  on $|z|=1$
is at least
as large as the number of sign changes of $h(e^{2\pi i j/2K})$
for $j=0,\ldots,2K$.  
Here a ``sign change'' of $\lambda_j=h(e^{2\pi i j/2K})$ means
that $\lambda_j \lambda_{j+1}\le 0$.

We have
\begin{align}
h(e^{2\pi i j/2K})=\mathstrut& (-1)^j ( f(e^{-2\pi i j/2K}) + f(e^{2\pi i j/2K}))\cr
=\mathstrut& 2 (-1)^j  \sum_{k=0}^K a_k \cos(2 \pi j/2 K) \cr
=\mathstrut& 2 (-1)^j  C_{a}(2 \pi j/2 K).
\end{align}
Putting everything together:
\begin{align}
Z(C_{\overline{a}}) \ge\mathstrut& \#\{\text{sign changes of } h(e^{2\pi i j/2K}) \} \cr
=\mathstrut& \#\{\text{sign\ changes of } (-1)^j C_{a}({2\pi  j/2K}) \cr
\ge\mathstrut& 2K - \#\{\text{sign changes of } C_{a}({2\pi  j/2K}) \} \cr
\ge\mathstrut& 2K - Z(C_{a}),
\end{align}
as claimed.


\section{Sine polynomials and other cases}\label{sec:other}

The proof of Proposition~\ref{prop:main} is easily modified to handle
other cases, such as odd trigonometric polynomials:
$\sum b_k \sin(k x)$.  For that case merely replace
$h(z)$ by
\begin{equation}
h^-(z)=z^K  f(1/z) - z^{-K}f(z) .
\end{equation}

More generally, fix $\varphi$
and define 
\begin{equation}
h^\varphi(z)=e^{i \varphi} z^K  f(1/z) -e^{-i \varphi} z^{-K}f(z) .
\end{equation}
Following the proof of Proposition~\ref{prop:main} we find that
if $X = \tan(\varphi)$ is any real number (or $\infty$, suitably interpreted) then
\begin{equation}\label{eqn:genpalindrome}
Z(F_{X,a}) + Z(F_{X,\overline{a}}) \ge 2 K,
\end{equation}
where
\begin{equation}
F_{X,a}(x) = \sum_{k=0}^K a_k \cos(k x)+ X \ \sum_{k=0}^K a_k \sin(k x).
\end{equation}

\end{document}